\tikzstyle{vertex}=[circle, draw, inner sep=0pt, minimum size=6pt] 
\newcommand{\vertex}{\node[vertex]}
\def \Q {{\mathbb Q}}
\def \N {{\mathbb N}}
\def \Z {{\mathbb Z}}
\def \R {{\mathbb R}}
\newtheorem{theorem}{Theorem}[section]
\newtheorem{cor}[theorem]{Corollary}
\newtheorem{definition}{Definition}[section]
\title{\textbf{Calkin-Wilf tree}}
\author {K. Siddharth Choudary, A. Satyanarayana Reddy\\
Department of 
Mathematics, Shiv Nadar 
University, India-201314\\ (e-mail: 
sk597@snu.edu.in, satyanarayana.reddy@snu.edu.in).
  }
\date{}
\begin{document}
\maketitle
\section{Introduction and preliminaries}
The Calkin-Wilf tree named after Neil Calkin and Herb Wilf. They used this tree in~\cite{C:W} to  enumerate rational numbers in a novel approach. 
The Calkin-Wilf tree is a rooted    binary tree, where each vertex (or fraction)  has a left and a right child. 
The vertices of this tree are labeled by fractions.  The root node is labeled with $\frac{1}{1}.$ 
If the label of a vertex is  $\frac{a}{b}$, then the labels of  its  left 
 and right children respectively are $\frac{a}{a+b}$ and $\frac{a+b}{b}.$ We denote $\gcd(a,b)$ as $(a,b).$

 \begin{tikzpicture}
	\vertex (c1) at (1,2) [label=above:$\frac{a}{b}$]{};
	\vertex (c2) at (0,1) [label=below:$\frac{a}{a+b}$]{};
	\vertex (c3) at (2,1)  [label=below:$\frac{a+b}{b}$]{};
		\path[-]
		(c1) edge (c2)
		(c1) edge (c3)
		
		;
		
\end{tikzpicture}\\

The following rooted tree is  the Calkin Wilf tree (CW-tree) of height $5.$
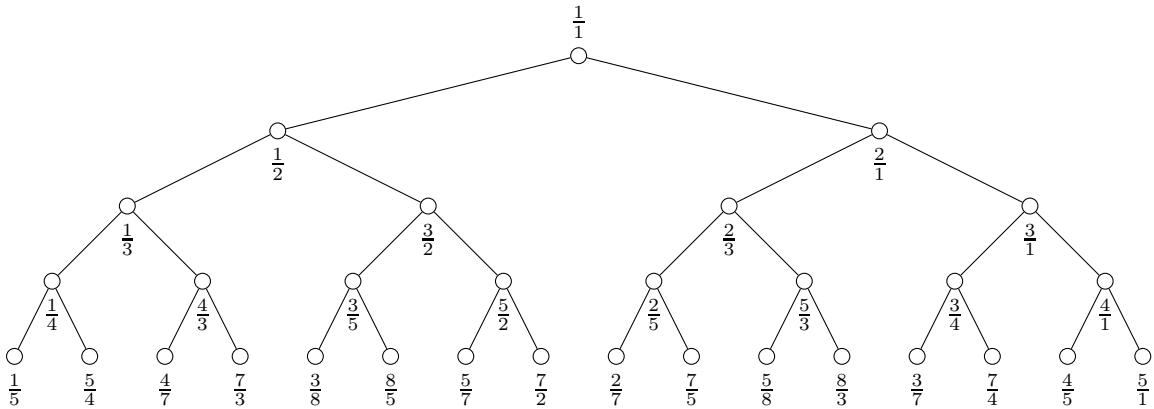
\begin{figure}[h]
\unitlength=0.65mm
 \begin{tikzpicture}
	\vertex (c1) at (7.5,0) [label=above:$\frac{1}{1}$]{};
	\vertex (c2) at (3.5,-1) [label=below:$\frac{1}{2}$]{};
	\vertex (c3) at (11.5,-1)  [label=below:$\frac{2}{1}$]{};
	\vertex (c4) at (1.5,-2) [label=below:$\frac{1}{3}$]{};
	\vertex (c5) at (5.5,-2)  [label=below:$\frac{3}{2}$]{};
	\vertex (c6) at (9.5,-2) [label=below:$\frac{2}{3}$]{};
	\vertex (c7) at (13.5,-2) [label=below:$\frac{3}{1}$]{};	
	\vertex (c8) at (0.5,-3) [label=below:$\frac{1}{4}$]{};
	\vertex (c9) at (2.5,-3)  [label=below:$\frac{4}{3}$]{};
	\vertex (c10) at (4.5,-3) [label=below:$\frac{3}{5}$]{};
	\vertex (c11) at (6.5,-3) [label=below:$\frac{5}{2}$]{};
	\vertex (c12) at (8.5,-3)  [label=below:$\frac{2}{5}$]{};
	\vertex (c13) at (10.5,-3) [label=below:$\frac{5}{3}$]{};
	\vertex (c14) at (12.5,-3) [label=below:$\frac{3}{4}$]{};
	\vertex (c15) at (14.5,-3)  [label=below:$\frac{4}{1}$]{};	
	\vertex (c16) at (0,-4)  [label=below:$\frac{1}{5}$]{};
	\vertex (c17) at (1,-4)  [label=below:$\frac{5}{4}$]{};
	\vertex (c18) at (2,-4)  [label=below:$\frac{4}{7}$]{};
	\vertex (c19) at (3,-4)  [label=below:$\frac{7}{3}$]{};
	\vertex (c20) at (4,-4)  [label=below:$\frac{3}{8}$]{};
	\vertex (c21) at (5,-4)  [label=below:$\frac{8}{5}$]{};
	\vertex (c22) at (6,-4)  [label=below:$\frac{5}{7}$]{};
	\vertex (c23) at (7,-4)  [label=below:$\frac{7}{2}$]{};
	\vertex (c24) at (8,-4)  [label=below:$\frac{2}{7}$]{};
	\vertex (c25) at (9,-4)  [label=below:$\frac{7}{5}$]{};
	\vertex (c26) at (10,-4)  [label=below:$\frac{5}{8}$]{};
	\vertex (c27) at (11,-4)  [label=below:$\frac{8}{3}$]{};
	\vertex (c28) at (12,-4)  [label=below:$\frac{3}{7}$]{};
	\vertex (c29) at (13,-4)  [label=below:$\frac{7}{4}$]{};
	\vertex (c30) at (14,-4)  [label=below:$\frac{4}{5}$]{};
	\vertex (c31) at (15,-4)  [label=below:$\frac{5}{1}$]{};
	
		\path[-]
		(c1) edge (c2)
		(c1) edge (c3)
		(c2) edge (c4)
		(c2) edge (c5)
		(c3) edge (c6)
		(c3) edge (c7)
		(c4) edge (c8)
		(c4) edge (c9)
		(c5) edge (c10)
		(c5) edge (c11)
		(c6) edge (c12)
		(c6) edge (c13)
		(c7) edge (c14)
		(c7) edge (c15)		
		(c8) edge (c16)
		(c8) edge (c17)
		(c9) edge (c18)
		(c9) edge (c19)
		(c10) edge (c20)
		(c10) edge (c21)
		(c11) edge (c22)
		(c11) edge (c23)
		(c12) edge (c24)
		(c12) edge (c25)
		(c13) edge (c26)
		(c13) edge (c27)
		(c14) edge (c28)
		(c14) edge (c29)
		(c15) edge (c30)
		(c15) edge (c31)

		;

\end{tikzpicture}
\caption{CW-tree of height 5}
\label{fig:CW}
\end{figure}

Before stating few immediate properties of CW-tree, we need the following notations. Let $r=\frac{a}{b}$ be a  fraction.
 Then we denote trace, complexity, simplicity of $r$ as $t(r), c(r), s(r)$ respectively. And they are defined as $t(r)=a+b,c(r)=ab,s(r)=\frac{1}{c(r)}.$ Most of  these observations stated and proved in one of the following~\cite{A:Z, B:B:T, C:W, reny}, we are giving the proofs for the sake of completeness.

\begin{theorem}\label{thm:prop}
 \begin{enumerate}
  \item \label{thm:prop:1}  Every fraction in CW-tree is in reduced form.
  \item \label{thm:prop:2} Every positive rational number appears uniquely in CW-tree.
    \item \label{thm:prop:3} At any given level, denominator of any fraction is equal to numerator of its successive fraction (fraction on its right).
  \item \label{thm:prop:4} The $j^{th}$ vertex in any given level is the reciprocal of $j^{th}$ vertex form the end of that level.
  \item \label{thm:prop:5} Every vertex is the product of its children.
  \item \label{thm:prop:6} Product of all the elements in a given level is $1.$
  \item \label{thm:prop:7} Sum of simplicities of all elements in a level is $1.$
  \item \label{thm:prop:8} Product of complexities of all the elements in a level is a perfect square.
  \item \label{thm:prop:9}Sum of traces of all the elements at a level $n$ is $2\cdot 3^{n-1}$
  \item \label{thm:prop:10}Sum of complexities at level $n$ is equal to sum of squares of traces at level $n-1.$
  \item \label{thm:prop:11}Sum of all elements in a level $n$ is $3\cdot 2^{n-2}-\frac{1}{2}.$
 \end{enumerate}
\end{theorem}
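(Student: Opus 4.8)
The plan is to set up a recurrence for $S_n$, the sum of all fractions at level $n$, by relating level $n$ to level $n-1$ through the parent--child rule. Each vertex $\frac{a}{b}$ at level $n-1$ contributes exactly two fractions to level $n$: its left child $\frac{a}{a+b}$ and its right child $\frac{a+b}{b}$. Writing $L_n$ for the sum of all left children and $R_n$ for the sum of all right children, we have $S_n = L_n + R_n$, so the whole problem reduces to evaluating these two partial sums.

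The right-child sum is immediate. Since $\frac{a+b}{b} = \frac{a}{b} + 1$, summing over all $2^{n-2}$ vertices of level $n-1$ gives $R_n = S_{n-1} + 2^{n-2}$ (valid for $n \ge 2$, as one checks that level $m$ has $2^{m-1}$ vertices).

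The left-child sum is where the symmetry of the tree does the work, and this is the step I expect to be the crux. Here I would invoke part \ref{thm:prop:4} of Theorem \ref{thm:prop}: for $n \ge 3$ the level $n-1$ has an even number of vertices, none equal to $\frac{1}{1}$, so they pair off into reciprocals $\frac{a}{b} \leftrightarrow \frac{b}{a}$ with no fixed point. The left child of $\frac{a}{b}$ is $\frac{a}{a+b}$ and that of $\frac{b}{a}$ is $\frac{b}{a+b}$, and these add to $\frac{a+b}{a+b}=1$. Since there are $2^{n-3}$ such reciprocal pairs, $L_n = 2^{n-3}$ (the small cases $n=1,2$ being checked by hand, e.g.\ $L_2 = \tfrac12 = 2^{-1}$). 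Combining the two evaluations yields the clean recurrence
$$S_n = S_{n-1} + 2^{n-3} + 2^{n-2} = S_{n-1} + 3\cdot 2^{n-3}, \qquad n \ge 2,$$
with initial value $S_1 = 1$.

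Finally I would unfold this telescoping recurrence as $S_n = S_1 + 3\sum_{k=2}^{n} 2^{k-3}$ and evaluate the geometric sum $\sum_{k=2}^{n} 2^{k-3} = 2^{n-2} - \tfrac12$ to obtain $S_n = 1 + 3\bigl(2^{n-2}-\tfrac12\bigr) = 3\cdot 2^{n-2} - \tfrac12$, as claimed. The only subtlety to watch is the boundary behaviour at small $n$, where the reciprocal pairing degenerates because $\frac{1}{1}$ is its own reciprocal; this is handled by verifying $n=1$ and $n=2$ directly before launching the recurrence.
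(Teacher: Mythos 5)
Your argument for Part \ref{thm:prop:11} is correct and takes essentially the same route as the paper: both proofs use the reciprocal pairing guaranteed by Part \ref{thm:prop:4} to derive the recurrence $S_n = S_{n-1} + 3\cdot 2^{n-3}$ and then telescope from $S_1 = 1$; the paper merely groups the algebra differently, summing all four children of a reciprocal pair at once to get $3 + \frac{a}{b} + \frac{b}{a}$, whereas you split the sum into right children (each contributing $+1$) and left children (pairing to $1$). Note only that your proposal covers just Part \ref{thm:prop:11} of the eleven-part theorem, so it is a complete substitute for that portion of the paper's proof but not for the remaining parts.
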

\begin{proof}
We prove these results except Part~\ref{thm:prop:2}  by using Mathematical induction on levels. \\
 Proof of Part~\ref{thm:prop:1}. 
 The only fraction at  level 1 is $\frac{1}{1}$ and  $(1,1)=1.$ Assume that all the fractions at  level $k$ are in reduced form. Since every fraction (or vertex) at level $k+1$ is a children of a fraction at level $k.$ Hence it is sufficient to prove $(a+b,b)=1=(a,a+b).$ But this follows immediately from  $(a,b)=1.$ \\
 
 Proof of Part~\ref{thm:prop:2}. Existence: Let $S$ be the set of all positive rationals in the simplified form which do not occur in the CW-tree. Assume $S$ is nonempty.

 Let $D(S)$ be the set of all denominators of elements in $S.$ Since $D(S)$ is the nonempty subset of $\N$, by well ordering principle it has a least element say $b.$  Let $S_b$ be the set of all rationals in $S$ such that whose denominator is $b.$ Again by well ordering principle, the set of numerators of elements of $S_b$ has a least element say $a.$ If $\frac{a}{b}<1$, then its parent $\frac{a}{b-a}$ occurs in CW-tree as $b-a<b.$ Now if $\frac{a}{b-a}$ is a fraction in the tree, then its left child $\frac{a}{b}$ is also a fraction from the tree. Hence we got the required contradiction. Proof is similar when $\frac{a}{b}<1.$

 Uniqueness: Let $T$ be the non-empty set of all positive rationals  which occur more than once in the CW-tree. Let $D(T)$ be the set of all denominators in $T$. Since $D(T)$ is a non-empty subset of $\N,$ from Well-Ordering principle, 
 $D(T)$ has the smallest element say $b.$ Let $D_b$ be the set of elements of $T$ such that whose denominator is $b.$ Let the smallest element in $D_b$ is $\frac{a}{b}.$
 
  If $\frac{a}{b}<$1,  then its parent  $\frac{a}{b-a}$ occurs at least twice in the tree as $\frac{a}{b}$  occurs at
  least  twice. Which is a contradiction as $b$ is the least denominator in $T.$ Similar situation arises if  $\frac{a}{b}>1.$ Hence the result follows.\\
 
Proof of Part~\ref{thm:prop:3}. Clearly true for level $2.$ Assume two consecutive terms at a level $k$ are $\frac{a}{b}$ and $\frac{b}{c}$. Then the right child of $\frac{a}{b}$ and left child of $\frac{b}{c}$  are $\frac{a+b}{b}$ and $\frac{b}{b+c}$ respectively. And they also satisfying the required property. Hence proved. \\

 Proof of Part~\ref{thm:prop:4}. Clearly true for the factions at level $1$ and level $2.$ Let $\frac{a}{b}$ be the $j^{th}$ vertex from the right and $\frac{b}{a}$ be $j^{th}$ vertex from the left at  level $k$. Then at level $k+1$, it is easy to see that  $(2j-1)^{th}$ and $(2j)^{th}$ vertex from the left are $\frac{a}{a+b}$ and $\frac{a+b}{b}$ (children of $\frac{a}{b}$). And $(2j-1)^{th}$ and $(2j)^{th}$ vertex from the right are $\frac{b}{a+b}$ and $\frac{a+b}{a}$ (children of $\frac{b}{a}$). Thus  the result is true for the fractions at  level $k+1.$\\
 
Proof of Part~\ref{thm:prop:5}. Easy to see.\\
 \\Proof of Part~\ref{thm:prop:6}. From Part~\ref{thm:prop:5}, the product of all elements at level $k$ is equal to product of  all elements at level $k-1.$ By continuing in this order, we get that product of all elements in level $k$ is equal to product of all elements in level $1$ which is $1.$ Also follows from Part~\ref{thm:prop:4}.\\
 
Proof of Part~\ref{thm:prop:7}. Sum of simplicities of children of a vertex $\frac{a}{b}$ is
$$s\left(\frac{a}{a+b}\right)+s\left(\frac{a+b}{b}\right) = \frac{1}{a(a+b)}+\frac{1}{(a+b)b}=\frac{1}{ab}=s\left(\frac{a}{b}\right).$$
 
 Hence  sum of simplicities of all fractions at any level  is equal to the sum of simplicities of fraction at level $1$ which is $1.$\\

Proof of Part~\ref{thm:prop:8}. Product of complexities of children of a vertex $\frac{a}{b}$ is
$$c\left(\frac{a}{a+b}\right)c\left(\frac{a+b}{b}\right)=a(a+b)(a+b)b=(a+b)^2c\left(\frac{a}{b}\right).$$
Thus the product of all complexities at a  level $n$ is $d^2$ times the product of all complexities at level $n-1.$ \\

Proof of Part~\ref{thm:prop:9}. Sum of traces of children of $\frac{a}{b}$ is 
$$t\left(\frac{a}{a+b}\right)+t\left(\frac{a+b}{b}\right)=3(a+b)=3 t\left(\frac{a}{b}\right).$$ 
Thus sum of traces of all elements at   level $n$ is $3$ times the sum of traces of all elements at  level $n-1.$ Sum of traces of elements in level $1$ is $2.$ Hence sum of traces of elements in level $n$ is $2\cdot 3^{n-1}$.\\

Proof of Part~\ref{thm:prop:10}. Sum of complexities of children of $\frac{a}{b}$ is 

$$c\left(\frac{a}{a+b}\right)+c\left(\frac{a+b}{b}\right)=a(a+b)+b(a+b)=(a+b)^2=t\left(\frac{a}{b}\right)^2.$$ \\

Proof of Part~\ref{thm:prop:11}. Let $\frac{a}{b}$ and $\frac{b}{a}$ be fractions at  level $n-1.$   Then sum of children of $\frac{a}{b}$ and $\frac{b}{a}$ is 
$$\frac{a}{a+b}+\frac{a+b}{b}+\frac{b}{a+b}+\frac{a+b}{a}=3+\frac{a}{b}+\frac{b}{a}.$$ Hence the sum of all elements at level $n$ is 
$3\cdot 2^{n-3}+ \mbox{ the sum of all elements at level $n-1$}.$ Therefore sum of all elements at level $n$ is $3\cdot 2^{n-3}+3\cdot 2^{n-4}+\dots+3\cdot 2^{-1}+\mbox{sum of all elements in level $1$} = \frac{3}{2}(2^{n-2}+2^{n-3}+\dots +2+1)+1=\frac{3}{2}(2^{n-1}-1)+1 = 3\cdot 2^{n-2}-\frac{1}{2}.$
 \end{proof}

 \section{Continued fractions and CW-tree}
 \begin{definition}
 A fraction of the form
\begin{equation}\label{eq:CF}
 a_0+\cfrac{1}{a_1+\cfrac{1}{a_2+\cfrac{1}{ a_3+\dotsb +
\cfrac{1}{a_{n-1}+\cfrac{1}{a_n } } }}}
\end{equation}
 is called finite continued fraction,
where $a_0,a_1,\ldots,a_n$ are real numbers and also  $a_1,a_2,\ldots,a_n$ are
positive  where as $a_0$ may be negative. The numbers $a_1,a_2,\ldots,a_n$  are
called {\em partial denominators.} Such a fraction called simple if all $a_i\in
\Z.$
\end{definition}
We denote the continued fraction in the Equation~(\ref{eq:CF}) by
$[a_0;a_1,a_2,\ldots,a_n].$

One can prove the following well known result by induction~\cite{burton}. Its converse can be proved by using Euclidean algorithm. 
\begin{theorem}
Every finite simple continued fraction represents a rational number.
\end{theorem}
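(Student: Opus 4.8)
The plan is to induct on $n$, the number of partial denominators, using the recursive self-similarity visible in Equation~(\ref{eq:CF}). The base case is $n=0$: here the continued fraction is simply $[a_0]=a_0$, an integer, hence rational. This settles the induction's foundation without any work.

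For the inductive step I would peel off the leading term. A simple continued fraction with partial denominators $a_1,\ldots,a_n$ satisfies
\[
[a_0;a_1,a_2,\ldots,a_n]=a_0+\cfrac{1}{[a_1;a_2,\ldots,a_n]},
\]
where the tail $[a_1;a_2,\ldots,a_n]$ is itself a simple continued fraction but with only $n-1$ partial denominators. By the inductive hypothesis the tail represents a rational number, say $\frac{p}{q}$, so it remains only to observe that $a_0+\frac{q}{p}=\frac{a_0p+q}{p}$ is again rational, which closes the induction.

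The single point demanding care --- and the only real obstacle, mild though it is --- is justifying that the reciprocal $\frac{1}{[a_1;\ldots,a_n]}$ is well defined, i.e.\ that the tail is nonzero. This is exactly where the standing hypothesis that $a_1,\ldots,a_n$ are \emph{positive} enters: since each partial denominator is a positive integer and the innermost quotient is positive, a one-line secondary induction gives $[a_1;a_2,\ldots,a_n]\ge a_1\ge 1>0$. Consequently $p\neq 0$, the reciprocal exists, and the expression $\frac{a_0p+q}{p}$ lies in $\Q$. I would flag this positivity check explicitly, as it is the hinge on which the whole argument turns, even though the arithmetic surrounding it is routine.
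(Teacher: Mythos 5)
Your proof is correct, and it follows exactly the route the paper indicates: the paper omits the argument, remarking only that the result ``can be proved by induction'' (citing Burton), and your induction on the number of partial denominators --- peeling off $a_0$, applying the hypothesis to the tail $[a_1;a_2,\ldots,a_n]$, and using positivity of $a_1,\ldots,a_n$ to guarantee the tail is nonzero --- is the standard filling-in of that argument. The explicit positivity check you flag is indeed the only point of substance, and you handle it correctly.
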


Let $r\in \Q,r>1$ and $r=[a_0;a_1,a_2,\ldots,a_n].$ Then it is easy to see that $\frac{1}{r}=[0;a_0,a_1,a_2,\ldots,a_n].$ Thus the sum of terms of continued of $r$ is same as that of $\frac{1}{r}.$
Further if $[a_0;a_1,a_2,\ldots,a_n]$ is a continued fraction with $a_n>1$, then $[a_0;a_1,a_2,\ldots,a_n]=[a_0;a_1,a_2,\ldots,a_n-1,1].$
Thus continued fraction of a rational number is not unique.

Following result provides an interesting property of continued fractions of fractions at a given level in CW-tree.
\begin{theorem}
 Sum of the terms in a continued fraction  of any fraction  from the level $n$ of CW-tree  is $n.$ 
\end{theorem}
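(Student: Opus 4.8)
The plan is to induct on the level $n$, exploiting the fact that every vertex at level $n+1$ arises from its parent at level $n$ by exactly one of the two child operations, together with the observation that each child operation raises the sum of continued-fraction terms by precisely $1$. Throughout I write $\sigma(r)$ for the sum of the terms in a continued fraction of the positive rational $r$.

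First I would settle well-definedness. The only ambiguity in a simple continued fraction is the identity $[a_0;a_1,\ldots,a_k]=[a_0;a_1,\ldots,a_k-1,1]$ for $a_k>1$ (noted in the excerpt), and this replaces the last term $a_k$ by $(a_k-1)+1$, leaving the sum of terms unchanged. Hence $\sigma(r)$ depends only on $r$ and not on the chosen representation, so the statement to be proved is meaningful.

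Next I would prove two transformation rules for $\sigma$. (i) Reciprocation preserves the sum, $\sigma(1/r)=\sigma(r)$: this follows from the two standard reciprocal formulas, namely $[a_0;a_1,\ldots,a_k]\mapsto[0;a_0,a_1,\ldots,a_k]$ when $r>1$ (already recorded just before the statement) and $[0;a_1,\ldots,a_k]\mapsto[a_1;a_2,\ldots,a_k]$ when $r<1$, each of which merely inserts or deletes a leading $0$ and so leaves the sum of terms fixed (the case $r=1$ being trivial). (ii) Incrementing raises the sum by one, $\sigma(r+1)=\sigma(r)+1$, since $r+1=[a_0+1;a_1,\ldots,a_k]$ only bumps $a_0$ by $1$. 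I would then translate these into the tree: the right child of $\frac{a}{b}$ is $\frac{a+b}{b}=r+1$, so by (ii) its sum is $\sigma(r)+1$; the left child is $\frac{a}{a+b}$, which is the reciprocal of $1+1/r$, i.e. the composite reciprocate--increment--reciprocate applied to $r$, so by (i), (ii), (i) in turn its sum is again $\sigma(r)+1$. With the base case $\frac{1}{1}=[1]$ giving $\sigma=1$ at level $1$, the induction then yields $\sigma=n$ for every fraction at level $n$.

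The main obstacle is purely the bookkeeping in the reciprocal lemma (i): one must verify $\sigma(1/r)=\sigma(r)$ uniformly across the regimes $r>1$, $r<1$, and the boundary or degenerate cases (integers $r=[a_0]$, unit fractions $r=[0;a_1]$, and $r=1$), since these are exactly the places where a leading zero appears or disappears and where one must check the reciprocal formula still applies. Once that lemma is pinned down, the increment rule (ii) and the inductive step are immediate, and no further computation is required.
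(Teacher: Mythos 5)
Your proof is correct, and it shares the paper's overall skeleton---induction on the level, base case $\frac{1}{1}=[1]$, and the key step that each child operation raises the sum of continued-fraction terms by exactly $1$---but you organize the key step differently. The paper invokes Part~\ref{thm:prop:4} of Theorem~\ref{thm:prop} (each level is closed under reciprocation) to assume without loss of generality that $r=\frac{a}{b}>1$, and then computes the children's continued fractions explicitly as $\frac{a}{a+b}=[0;1,a_0,a_1,\ldots,a_n]$ and $\frac{a+b}{b}=[a_0+1;a_1,\ldots,a_n]$, the case $r<1$ being absorbed by the tree's level symmetry together with the earlier remark that $r$ and $\frac{1}{r}$ have equal term sums. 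You instead prove two standalone lemmas---$\sigma(1/r)=\sigma(r)$ in all regimes and $\sigma(r+1)=\sigma(r)+1$---and factor the left-child map as $r\mapsto 1/(1+1/r)$, i.e.\ reciprocate--increment--reciprocate; unwound for $r>1$ this reproduces exactly the paper's computation, but your version treats $r<1$ and $r>1$ uniformly and makes the proof independent of Theorem~\ref{thm:prop}. You also settle a point the paper leaves implicit: the sum of terms is well defined, because the only ambiguity $[a_0;\ldots,a_k]=[a_0;\ldots,a_k-1,1]$ preserves the sum---which matters since the statement speaks of ``a continued fraction'' of the given fraction. The trade-off is that the paper's argument is shorter given the machinery it has already built, while yours is self-contained and slightly more careful about the degenerate cases.
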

\begin{proof}
We prove the result by induction on level number. 
The only fraction of at level $1$ is $1$ and whose continued fraction is $[1].$ Assume the result  is for level $k.$ That is if $r=\frac{a}{b}$ is  a continued fraction at a level $k$ and $[a_0;a_1,a_2,\ldots,a_n]$ is its continued fraction, then $a_0+a_1+\dots+a_n=n.$
Since every fraction at level $k+1$ is a children of a fraction from level $k.$ Hence it is sufficient to prove the result for 
$\frac{a}{a+b}$ and $\frac{a+b}{b}.$ Further from Part~\ref{thm:prop:4} of  Theorem~\ref{thm:prop} we can assume $r=\frac{a}{b}>1.$ 
 Now 
 $$\frac{a}{a+b}=\frac{1}{\frac{a+b}{a}}=\frac{1}{1+\frac{b}{a}}=[0;1,a_0,a_1,a_2,\ldots,a_n],$$
 $$\frac{a+b}{b}=1+\frac{a}{b}=[a_0+1;a_1,a_2,\ldots,a_n].$$
 Hence the result follows.
\end{proof}

Let $r$ be a vertex in CW-tree, we denote the unique path from root node $\frac{1}{1}$ to $r$ as $P(r)$ and it is of the form $R^{a_0}L^{a_1}R^{a_2}\cdots L^{a_n}$ or $L^{b_0}R^{b_1}L^{b_2}\cdots R^{b_n}.$  Here $R$ and $L$ indicates right and left directions respectively.

The following result establishes a one to one correspondence between
continued fraction and $P(r)$ for any fraction on $r$ of CW-tree. 

\begin{theorem}
 If $[a_0;a_1,a_2,\ldots,a_n]$ is a continued fraction of fraction $r$ of CW-tree, then $P(r)$ is $R^{a_0}L^{a_1}R^{a_2}\cdots L^{a_n}$ or $L^{a_0}R^{a_1}L^{a_2}\cdots R^{a_n}$ depending on $n$ is even and or respectively.\\ Conversely if $P(r)\in \{R^{a_0}L^{a_1}R^{a_2}\cdots L^{a_n}, L^{a_0}R^{a_1}L^{a_2}\cdots R^{a_n}\}$, then 
 $r=[a_0;a_1,a_2,\ldots,a_n].$
\end{theorem}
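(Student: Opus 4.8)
The plan is to prove both directions by induction on $n$, the number of partial denominators (equivalently, the number of ``blocks'' in the path word $P(r)$), using the recursive structure of the CW-tree established by the child rule. The two computational identities that drive the previous theorem, namely
\begin{align*}
\frac{a}{a+b} &= [0;1,a_0,a_1,\ldots,a_n],\\
\frac{a+b}{b} &= [a_0+1;a_1,\ldots,a_n],
\end{align*}
will be exactly the tools I need, because they describe how a continued fraction changes when one moves from a parent to its left or right child. First I would fix conventions: for $r>1$ the path begins with an $R$ step (so we expect a word of the form $R^{a_0}L^{a_1}\cdots$ with $a_0\ge 1$), while for $r<1$ the path begins with an $L$ step (giving $a_0=0$ and a word $L^{a_1}R^{a_2}\cdots$). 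The parity statement in the theorem records precisely which of the two alphabet-orderings the word takes, so I would phrase the induction hypothesis to carry both the equality $r=[a_0;a_1,\ldots,a_n]$ and the matching shape of $P(r)$ simultaneously.

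For the forward direction I would argue as follows. The base case is $r=\tfrac11=[1]$ with empty path, which is consistent. For the inductive step, suppose $r=\tfrac{a}{b}>1$ sits at some vertex with $P(r)=R^{a_0}L^{a_1}\cdots$ and $r=[a_0;a_1,\ldots,a_n]$. Passing to the right child $\tfrac{a+b}{b}$ appends one more $R$ to the initial run, changing $a_0$ to $a_0+1$; the displayed identity confirms $\tfrac{a+b}{b}=[a_0+1;a_1,\ldots,a_n]$, so the continued fraction and the path update in lockstep. Passing to the left child $\tfrac{a}{a+b}<1$ instead \emph{starts a new $L$-run} in front of the existing word, which the identity $\tfrac{a}{a+b}=[0;1,a_0,\ldots,a_n]$ mirrors by prepending the entries $0,1$ (the leading $0$ is the bookkeeping for a word that now begins with $L$, and the new $1$ is the length of that fresh $L$-run). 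The case $r<1$ is handled by the reciprocal symmetry $\tfrac1r=[0;a_0,a_1,\ldots]$ together with Part~\ref{thm:prop:4} of Theorem~\ref{thm:prop}, which lets me reduce every left-subtree statement to a right-subtree statement about the reciprocal vertex on the mirror side of the level.

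The converse is really the same bijection read backwards, so rather than reprove it from scratch I would observe that the forward map sends the finite set of vertices at level $n$ injectively into continued-fraction data, and that distinct path words yield distinct vertices (since $P(r)$ is by definition the unique path to $r$). Given a word $R^{a_0}L^{a_1}\cdots L^{a_n}$, reading it left to right and applying the child identities one block at a time reconstructs exactly the rational $[a_0;a_1,\ldots,a_n]$; formally this is another induction on the number of blocks, peeling off the last exponent and invoking the inductive hypothesis on the shorter word.

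The main obstacle I anticipate is purely bookkeeping, not conceptual: reconciling the two normal forms of a continued fraction (the ambiguity $[a_0;\ldots,a_n]=[a_0;\ldots,a_n-1,1]$ noted before the theorem) with the two allowed shapes of the path word, and making sure the ``$a_0=0$ versus $a_0\ge1$'' convention lines up with ``starts with $L$ versus starts with $R$'' consistently across the reciprocal symmetry. I would pin this down once at the outset by declaring which normal form is used (say, requiring $a_n\ge 1$ and allowing $a_0=0$ only when $r<1$), after which the parity condition in the statement becomes automatic and both inductions go through cleanly.
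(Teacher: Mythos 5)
The paper states this theorem and immediately moves on --- it contains no proof at all --- so your attempt has to stand on its own, and it does not: it breaks at exactly the point that distinguishes the Calkin--Wilf tree from the Stern--Brocot tree. Your two child identities are correct, but your path bookkeeping is backwards. In the CW-tree a child is reached by \emph{appending} a letter at the leaf end of $P(r)$; the run at the root end never changes. Your inductive step instead asserts that the right child ``appends one more $R$ to the initial run'' and the left child ``starts a new $L$-run in front of the existing word,'' i.e.\ you let the word grow at the root end, in lockstep with the continued-fraction operations $[a_0;a_1,\ldots]\mapsto[a_0+1;a_1,\ldots]$ and $[a_0;a_1,\ldots]\mapsto[0;1,a_0,a_1,\ldots]$, which act at the \emph{front} of the continued fraction. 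Since the word actually grows at the opposite end from where the continued fraction changes, the correspondence is order-reversing, and your hypothesis cannot be propagated as stated. Concretely, your standing convention ``for $r>1$ the path begins with an $R$ step'' is false here: $\frac{3}{2}=[1;2]>1$ has $P(\frac{3}{2})=LR$, and its right child $\frac{5}{2}=[2;2]$ has $P(\frac{5}{2})=LRR$, so the appended $R$ joins the \emph{final} run (what is true is that $r>1$ iff $P(r)$ \emph{ends} in $R$). Your converse sketch fails the same way: the vertex reached by reading $R^{a_0}L^{a_1}$ from the root is $\frac{a_0+1}{a_1(a_0+1)+1}=[0;a_1,a_0+1]$, not $[a_0;a_1]$.

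There is also an off-by-one which your base case quietly hides. You call $r=\frac{1}{1}=[1]$ with empty path ``consistent,'' but the statement as written demands $P(\frac{1}{1})=R^{1}$ or $L^{1}$. In fact, by the paper's own earlier theorem the continued-fraction terms of a level-$n$ fraction sum to $n$, while the path to a level-$n$ vertex has only $n-1$ edges, so under no reading can the exponents of $P(r)$ literally be $a_0,a_1,\ldots,a_n$: the statement you were asked to prove is itself misstated (perhaps why the paper offers no proof). The correct version, which the examples above confirm, is that $P(r)$ read from $r$ \emph{back to the root} is $R^{a_0}L^{a_1}R^{a_2}\cdots$ with the last exponent $a_n$ replaced by $a_n-1$; equivalently, take the representation $[a_0;a_1,\ldots,a_n-1,1]$ and discard the final $1$. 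Once the statement is repaired, an induction using exactly your two identities does go through, but with the roles of the two ends of the word exchanged: appending $R$ at the leaf increments $a_0$ in the reversed reading, and appending $L$ performs $[a_0;\ldots]\mapsto[0;1,a_0,\ldots]$ there. Checking the base case honestly, rather than declaring it consistent, would have surfaced both corrections.
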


\section{Diagonals of CW-tree}
In this section, we study some sequences called {\em left} and {\em right} diagonals of CW-tree. Here diagonals are sequence of fractions in the tree which share the relative positions on consecutive levels. In order to  study these diagonals  we associate two matrices of size $n\times 2^{n-1}$ to a  CW-tree of height $n.$ One of the matrix corresponding  to height $4$ of CW-tree  is denoted by 
$L^4$ and 
$$L^4=\begin{bmatrix}
      \frac{1}{1} &0&0&0& 0 &0&0&0\\
      \frac{1}{2}&\frac{2}{1}&0 &0& 0 &0&0&0\\
      \frac{1}{3}&\frac{3}{2}&\frac{2}{3} &\frac{3}{1}& 0 &0&0&0\\
      \frac{1}{4}&\frac{4}{3}&\frac{3}{5} &\frac{5}{2}&\frac{2}{5} &\frac{5}{3}&\frac{3}{4}&\frac{4}{1}\\
      \end{bmatrix} $$
here the rows are taken from  the first four levels of the  figure~\ref{fig:CW}. Another matrix denoted $R^n$ is obtained from $L^n$ by keeping zero entries as it is and $(R^n)_{ij}=\frac{1}{(L^n)_{ij}}$
if $(L^n)_{ij}\ne 0.$  If we consider entire CW-tree, then corresponding matrices of infinite size are denoted by $L$ and $R$ respectively.

Nonzero entries from each column of $L$ forms a  sequence called {\em left diagonal} of CW-tree. The sequence corresponding to first column denoted $L_1$ is given by $\frac{1}{1}, \frac{1}{2},\frac{1}{3},\ldots$ or simply $(\frac{1}{n}).$ Hence from definition of $R$ the first right diagonal is $R_1=(\frac{n}{1}).$ It is clear that it is sufficient to study left diagonals. First few left diagonals are given below.
\begin{multicols}{4}
\begin{enumerate}
 \item $L_1=(\frac{1}{n})$
 \item $L_2=(\frac{n+1}{n})$
 \item $L_3=(\frac{n+1}{2n+1})$
 \item $L_4=(\frac{2n+1}{n})$
 \item $L_5=(\frac{n+1}{3n+2})$
 \item $L_6=(\frac{3n+21}{2n+1})$
 \item $L_7=(\frac{2n+1}{3n+1})$
 \item $L_8=(\frac{3n+1}{n})$
 \item $L_9=(\frac{n+1}{4n+3})$
 \item $L_{10}=(\frac{4n+3}{3n+2})$
\end{enumerate}
\end{multicols}

Matthew Gagne~\cite{gagne} conjectured $n^{th}$ terms for few  left diagonals. The following result is in that direction.

\begin{theorem}
 Let $L_n=\left(\frac{aj+b}{cj+d}\right)$ be the $n^{th}$ left diagonal in the CW-tree for n$>$1. Then $L_{2n-1}=\left(\frac{aj+b}{(a+c)j+(b+d)}\right)$ and $L_{2n}=\left(\frac{(a+c)j+(b+d)}{cj+d}\right)$.
 \end{theorem}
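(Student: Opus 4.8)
The plan is to exploit the self-similar ordering of children in the tree: at any level, read left to right, the two children of the $m$-th vertex occupy positions $2m-1$ (left child) and $2m$ (right child) on the next level. This is precisely the positional fact underlying the proof of Part~\ref{thm:prop:4} of Theorem~\ref{thm:prop}. Consequently, if the $n$-th vertex on level $k$ is $\frac{p}{q}$, then the $(2n-1)$-th vertex on level $k+1$ is its left child $\frac{p}{p+q}$, and the $(2n)$-th vertex on level $k+1$ is its right child $\frac{p+q}{q}$. This single observation links column $n$ of the matrix $L$ to columns $2n-1$ and $2n$ one row lower, and it is the engine of the whole argument.

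Next I would pin down the indexing so that the $j$-th term of $L_{2n-1}$ (resp.\ $L_{2n}$) is matched with the $j$-th term of $L_n$. Since level $k$ has $2^{k-1}$ vertices, column $n$ first becomes nonzero at the least level $\ell$ with $n\le 2^{\ell-1}$; call it $\ell(n)$. For $n>1$ one has $\ell(n)\ge 2$, and a short count using minimality of $\ell(n)$ shows $2n-1>2^{\ell(n)-1}$ while $2n\le 2^{\ell(n)}$, so columns $2n-1$ and $2n$ both first become nonzero on level $\ell(n)+1$; that is, $\ell(2n-1)=\ell(2n)=\ell(n)+1$. (This is exactly where the hypothesis $n>1$ is essential: for $n=1$ the position $2n-1=1$ already exists at the root level, producing the off-by-one one sees by comparing $L_1=(\frac{1}{n})$ against the formula.) Therefore the $j$-th term of $L_{2n-1}$ lies on level $\ell(n)+j$ and, by the observation above, is the left child of the vertex in position $n$ on level $\ell(n)+j-1$, which is precisely the $j$-th term of $L_n$; the same alignment holds for $L_{2n}$ with right children.

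Finally I would substitute. By hypothesis the $j$-th term of $L_n$ is $\frac{aj+b}{cj+d}$, so applying the child rule gives left child $\frac{aj+b}{(aj+b)+(cj+d)}=\frac{aj+b}{(a+c)j+(b+d)}$ and right child $\frac{(aj+b)+(cj+d)}{cj+d}=\frac{(a+c)j+(b+d)}{cj+d}$, which are exactly the claimed expressions for $L_{2n-1}$ and $L_{2n}$. I expect the only genuine obstacle to be the bookkeeping in the middle step, namely verifying that the diagonals $L_{2n-1}$ and $L_{2n}$ are synchronized term-by-term with $L_n$ (the starting-level computation, together with the role of $n>1$). Once that synchronization is in place the arithmetic is immediate, and in particular no separate induction on the level is needed — the recursion on the diagonal index $n$ does all the work.
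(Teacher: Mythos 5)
Your proof is correct and takes essentially the same route as the paper's: both rest on the positional fact that the children of the $m$-th vertex at one level occupy positions $2m-1$ and $2m$ at the next, so the terms of $L_{2n-1}$ and $L_{2n}$ are the left and right children of the corresponding terms of $L_n$, and the child-rule arithmetic finishes the argument. Your explicit starting-level count (showing that for $n>1$ columns $2n-1$ and $2n$ first become nonzero exactly one level after column $n$) rigorously justifies the term-by-term synchronization that the paper merely asserts, but this is a refinement of the same argument, not a different one.
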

 \begin{proof}
  Let $t_{k,n}$ be the $n^{th}$ element in $k^{th}$ level. We have that the children of $t_{k,n}$ are $t_{k+1,2n-1}$ (left child) and $t_{k+1,2n}$ (right child). Let $t_{k,n}$ be the first element in $L_n$. Then $t_{k+1,2n-1}$ and $t_{k+1,2n}$ are the first elements of $L_{2n-1}$ and $L_{2n}$. By induction, we get that the $j^{th}$ elements of $L_{2n-1}$ and $L_{2n}$ are left and right children of $j^{th}$ element of $L_n$ respectively. Hence for n$>$1 $j^{th}$ terms of $L_n$, $L_{2n-1}$ and $L_{2n}$ are $\left(\frac{aj+b}{cj+d}\right)$, $\left(\frac{aj+b}{(a+c)j+(b+d)}\right)$ and $\left(\frac{(a+c)j+(b+d)}{cj+d}\right)$ respectively.
  \end{proof}

Above result can be expressed in terms of tree as follows.

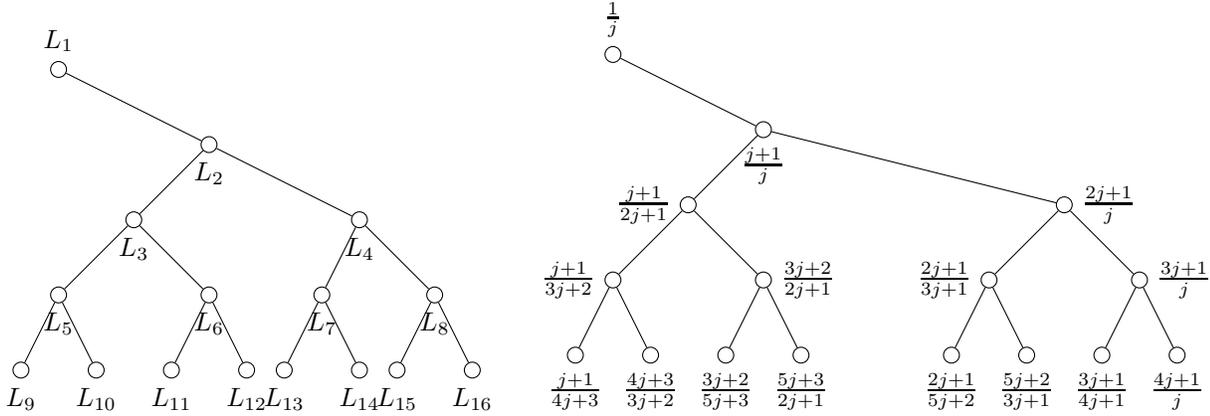
\begin{figure}[h]
\unitlength=0.65mm
\begin{tabular}{ll}
\begin{tikzpicture}
	\vertex (l1) at (3,2) [label=above:$L_1$]{};
	\vertex (l2) at (5,1) [label=below:$L_2$]{};
	\vertex (l3) at (4,0)  [label=below:$L_3$]{};
	\vertex (l4) at (7,0)  [label=below:$L_4$]{};
	\vertex (l5) at (3,-1)  [label=below:$L_5$]{};
	\vertex (l6) at (5,-1)  [label=below:$L_6$]{};
	\vertex (l7) at (6.5,-1)  [label=below:$L_7$]{};
	\vertex (l8) at (8,-1)  [label=below:$L_8$]{};
	\vertex (l9) at (2.5,-2)  [label=below:$L_9$]{};
	\vertex (l10) at (3.5,-2)  [label=below:$L_{10}$]{};
	\vertex (l11) at (4.5,-2)  [label=below:$L_{11}$]{};
	\vertex (l12) at (5.5,-2)  [label=below:$L_{12}$]{};
	\vertex (l13) at (6,-2)  [label=below:$L_{13}$]{};
	\vertex (l14) at (7,-2)  [label=below:$L_{14}$]{};
	\vertex (l15) at (7.5,-2)  [label=below:$L_{15}$]{};
	\vertex (l16) at (8.5,-2)  [label=below:$L_{16}$]{};
		\path[-]
		(l1) edge (l2)
		(l2) edge (l3)
		(l3) edge (l5)
		(l2) edge (l4)
		(l3) edge (l6)
		(l4) edge (l7)
		(l4) edge (l8)
		(l5) edge (l9)
		(l5) edge (l10)
		(l6) edge (l11)
		(l6) edge (l12)
		(l7) edge (l13)
		(l7) edge (l14)
		(l8) edge (l15)
		(l8) edge (l16)
		;
		
\end{tikzpicture}&
\begin{tikzpicture}
	\vertex (l1) at (3,2) [label=above:$\frac{1}{j}$]{};
	\vertex (l2) at (5,1) [label=below:$\frac{j+1}{j}$]{};
	\vertex (l3) at (4,0)  [label=left:$\frac{j+1}{2j+1}$]{};
	\vertex (l4) at (9,0)  [label=right:$\frac{2j+1}{j}$]{};
	\vertex (l5) at (3,-1)  [label=left:$\frac{j+1}{3j+2}$]{};
	\vertex (l6) at (5,-1)  [label=right:$\frac{3j+2}{2j+1}$]{};
	\vertex (l7) at (8,-1)  [label=left:$\frac{2j+1}{3j+1}$]{};
	\vertex (l8) at (10,-1)  [label=right:$\frac{3j+1}{j}$]{};
	\vertex (l9) at (2.5,-2)  [label=below:$\frac{j+1}{4j+3}$]{};
	\vertex (l10) at (3.5,-2)  [label=below:$\frac{4j+3}{3j+2}$]{};
	\vertex (l11) at (4.5,-2)  [label=below:$\frac{3j+2}{5j+3}$]{};
	\vertex (l12) at (5.5,-2)  [label=below:$\frac{5j+3}{2j+1}$]{};
	\vertex (l13) at (7.5,-2)  [label=below:$\frac{2j+1}{5j+2}$]{};
	\vertex (l14) at (8.5,-2)  [label=below:$\frac{5j+2}{3j+1}$]{};
	\vertex (l15) at (9.5,-2)  [label=below:$\frac{3j+1}{4j+1}$]{};
	\vertex (l16) at (10.5,-2)  [label=below:$\frac{4j+1}{j}$]{};
		\path[-]
		(l1) edge (l2)
		(l2) edge (l3)
		(l3) edge (l5)
		(l2) edge (l4)
		(l3) edge (l6)
		(l4) edge (l7)
		(l4) edge (l8)
		(l5) edge (l9)
		(l5) edge (l10)
		(l6) edge (l11)
		(l6) edge (l12)
		(l7) edge (l13)
		(l7) edge (l14)
		(l8) edge (l15)
		(l8) edge (l16)
		;
		
\end{tikzpicture}
\end{tabular}
\caption{Diagonals of CW-tree}
\label{fig:CWD}
\end{figure}

By splitting the coefficient terms and the constant terms into separate trees, we get\\
\textbf{Coefficients:}
\begin{tabular}{ll}
\begin{tikzpicture}
	\vertex (l1) at (3,2) [label=above:$\frac{0}{1}$]{};
	\vertex (l2) at (5,1) [label=below:$\frac{1}{1}$]{};
	\vertex (l3) at (4,0)  [label=left:$\frac{1}{2}$]{};
	\vertex (l4) at (9,0)  [label=right:$\frac{2}{1}$]{};
	\vertex (l5) at (3,-1)  [label=left:$\frac{1}{3}$]{};
	\vertex (l6) at (5,-1)  [label=right:$\frac{3}{2}$]{};
	\vertex (l7) at (8,-1)  [label=left:$\frac{2}{3}$]{};
	\vertex (l8) at (10,-1)  [label=right:$\frac{3}{1}\hspace{5mm} \mbox{is same as}$]{};
	\vertex (l9) at (2.5,-2)  [label=below:$\frac{1}{4}$]{};
	\vertex (l10) at (3.5,-2)  [label=below:$\frac{4}{3}$]{};
	\vertex (l11) at (4.5,-2)  [label=below:$\frac{3}{5}$]{};
	\vertex (l12) at (5.5,-2)  [label=below:$\frac{5}{2}$]{};
	\vertex (l13) at (7.5,-2)  [label=below:$\frac{2}{5}$]{};
	\vertex (l14) at (8.5,-2)  [label=below:$\frac{5}{3}$]{};
	\vertex (l15) at (9.5,-2)  [label=below:$\frac{3}{4}$]{};
	\vertex (l16) at (10.5,-2)  [label=below:$\frac{4}{1}$]{};
		\path[-]
		(l1) edge (l2)
		(l2) edge (l3)
		(l3) edge (l5)
		(l2) edge (l4)
		(l3) edge (l6)
		(l4) edge (l7)
		(l4) edge (l8)
		(l5) edge (l9)
		(l5) edge (l10)
		(l6) edge (l11)
		(l6) edge (l12)
		(l7) edge (l13)
		(l7) edge (l14)
		(l8) edge (l15)
		(l8) edge (l16)
		;
		
\end{tikzpicture}&
 \begin{tikzpicture}
	\vertex (c1) at (1,2) [label=above:$\frac{0}{1}$]{};
	\vertex (c3) at (2,1)  [label=below:$CWT$]{};
		\path[-]
	
		(c1) edge (c3)

		;
		
\end{tikzpicture}
\end{tabular}\\

Let $b_n$ denotes the sequence of numerators of fractions of CW-tree {\it i.e.,} $(b_n)=(1,1,2,1,3,2,3,1,4,3,5,\ldots).$ Therefore if $t_n=\frac{b_{n-1}}{b_n}$, the the coefficient term of $L_{n+1}$ is $t_n$.\\
\\\textbf{Constants:}
\begin{tabular}{ll}
\begin{tikzpicture}
	\vertex (l1) at (3,2) [label=above:$\frac{1}{0}$]{};
	\vertex (l2) at (5,1) [label=below:$\frac{1}{0}$]{};
	\vertex (l3) at (4,0)  [label=left:$\frac{1}{1}$]{};
	\vertex (l4) at (9,0)  [label=right:$\frac{1}{0}$]{};
	\vertex (l5) at (3,-1)  [label=left:$\frac{1}{2}$]{};
	\vertex (l6) at (5,-1)  [label=right:$\frac{2}{1}$]{};
	\vertex (l7) at (8,-1)  [label=left:$\frac{1}{1}$]{};
	\vertex (l8) at (10,-1)  [label=right:$\frac{1}{0}\hspace{5mm} \mbox{is same as}$]{};
	\vertex (l9) at (2.5,-2)  [label=below:$\frac{1}{3}$]{};
	\vertex (l10) at (3.5,-2)  [label=below:$\frac{3}{2}$]{};
	\vertex (l11) at (4.5,-2)  [label=below:$\frac{2}{3}$]{};
	\vertex (l12) at (5.5,-2)  [label=below:$\frac{3}{1}$]{};
	\vertex (l13) at (7.5,-2)  [label=below:$\frac{1}{2}$]{};
	\vertex (l14) at (8.5,-2)  [label=below:$\frac{2}{1}$]{};
	\vertex (l15) at (9.5,-2)  [label=below:$\frac{1}{1}$]{};
	\vertex (l16) at (10.5,-2)  [label=below:$\frac{1}{0}$]{};
		\path[-]
		(l1) edge (l2)
		(l2) edge (l3)
		(l3) edge (l5)
		(l2) edge (l4)
		(l3) edge (l6)
		(l4) edge (l7)
		(l4) edge (l8)
		(l5) edge (l9)
		(l5) edge (l10)
		(l6) edge (l11)
		(l6) edge (l12)
		(l7) edge (l13)
		(l7) edge (l14)
		(l8) edge (l15)
		(l8) edge (l16)
		;
		
\end{tikzpicture}&
 \begin{tikzpicture}
	\vertex (l1) at (0,2) [label=above:$\frac{1}{0}$]{};
	\vertex (l2) at (1,1) [label=right:$\frac{1}{0}$]{};
	\vertex (l3) at (0,0) [label=left:$CWT$]{};
	\vertex (l4) at (2,0)  [label=right:$\frac{1}{0}$]{};
	\vertex (l7) at (1,-1)  [label=left:$CWT$]{};
	\vertex (l8) at (3,-1)  [label=right:$\frac{1}{0}$]{};
	\vertex (l15) at (2,-2)  [label=left:$CWT$]{};
	\vertex (l16) at (4,-2)  [label=below:$\frac{1}{0}$]{};
		\path[-]
		(l1) edge (l2)
		(l2) edge (l4)
		(l2) edge (l3)
		(l4) edge (l7)
		(l4) edge (l8)
		(l8) edge (l15)
		(l8) edge (l16)
		;
		
\end{tikzpicture}
\end{tabular}\\
For constant term of $L_{n+1}$, let $k=max\{x\in\N\cup\{0\}: n-\sum_{i=o}^{x} 2^{[log_2n]-i}\geq0\}.$ Then, constant term of $L_{n+1}$ is $t_m$ where m = n + $2^{[log_2n]-k-1}$ - $\sum_{i=o}^{k} 2^{[log_2n]-i}$.\\

Therefore $L_{n+1}$=$\frac{a_nj+a_m}{b_nj+b_m}.$

The following results are immediate hence we omit the proofs.
\begin{cor}
 If $L_n$ denote the $n^{th}$ left diagonal. Then $\cup_{i\in \N} L_{2^ni-2^{n-1}}=(n-1,n]\cap \Q^{+}.$
\end{cor}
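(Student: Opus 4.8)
The plan is to reduce the whole statement to the single value-identity $L_{2n}=L_n+1$, which is immediate from the preceding theorem: writing $L_n=\frac{aj+b}{cj+d}$ we get
$$L_{2n}=\frac{(a+c)j+(b+d)}{cj+d}=\frac{(aj+b)+(cj+d)}{cj+d}=\frac{aj+b}{cj+d}+1=L_n+1.$$
For $n\ge 1$ write $S_n=\bigcup_{i\in\N}L_{(2i-1)2^{n-1}}$ for the left-hand side, so that $S_n$ is the union of exactly those left diagonals whose index has $2$-adic valuation $n-1$. Here $S+1$ denotes $\{x+1:x\in S\}$.

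First I would set up the recursion. For $n\ge 2$ every index $(2i-1)2^{n-1}$ equals $2\cdot(2i-1)2^{n-2}$, and $i\mapsto(2i-1)2^{n-2}$ is a bijection from $\N$ onto the set of indices of valuation $n-2$; applying $L_{2m}=L_m+1$ termwise therefore gives the \emph{set} equality $S_n=S_{n-1}+1$, whence $S_n=S_1+(n-1)$ by induction on $n$.

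It then remains to prove the base case $S_1=(0,1]\cap\Q^{+}$, which I expect to be the main obstacle since it is where existence in the tree is used. The inclusion $S_1\subseteq(0,1]\cap\Q^{+}$ is routine: $L_1=\frac1j$ takes values in $(0,1]$, while for $i\ge 2$ the preceding theorem gives $L_{2i-1}=\frac{L_i}{L_i+1}$, which lies in $(0,1)$ because $L_i>0$. For the reverse inclusion I would take $r=\frac{a}{b}\in(0,1]\cap\Q^{+}$ in lowest terms; if $r=1$ then $r=L_1(1)\in S_1$, and if $0<a<b$ then $\gcd(a,b-a)=\gcd(a,b)=1$, so $r'=\frac{a}{b-a}$ is a reduced positive fraction. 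By Part~\ref{thm:prop:2} of Theorem~\ref{thm:prop} the fraction $r'$ occurs in the CW-tree, say $r'\in L_{k'}$; since $r$ is precisely the left child $\frac{a}{a+(b-a)}=\frac{a}{b}$ of $r'$, it lies in the left-child diagonal $L_{2k'-1}$, whose index is odd. Hence $r\in S_1$, giving $S_1=(0,1]\cap\Q^{+}$.

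Combining the two parts yields $S_n=S_1+(n-1)=\big((0,1]\cap\Q^{+}\big)+(n-1)=(n-1,n]\cap\Q^{+}$, because $r\mapsto r+(n-1)$ is a bijection of $(0,1]\cap\Q^{+}$ onto $(n-1,n]\cap\Q^{+}$. The two delicate points to watch are the reindexing under doubling, which must be verified to be a bijection so that $S_n=S_{n-1}+1$ is an equality rather than a mere inclusion, and the appeal to the existence half of Part~\ref{thm:prop:2} in the base case: surjectivity onto $(0,1]\cap\Q^{+}$ rests entirely on the guarantee that the parent $r'$ actually appears in the tree.
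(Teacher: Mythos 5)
The paper gives no proof of this corollary: it is one of the results declared ``immediate'' after the diagonal-recursion theorem, so there is no written argument to compare yours against line by line. Your proof supplies exactly what the authors leave to the reader, and its architecture is the intended one: the value identity $L_{2n}=L_n+1$, the induced set recursion $S_n=S_{n-1}+1$ obtained by reindexing under doubling, and the base case $S_1=(0,1]\cap\Q^{+}$, whose surjectivity half correctly isolates the real content, namely the existence statement in Part~\ref{thm:prop:2} of Theorem~\ref{thm:prop}.

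Two details need repair, both at index $1$, because the theorem you cite is stated only for $n>1$. First, your recursion at $n=2$ requires $L_2=L_1+1$, an instance of $L_{2n}=L_n+1$ with $n=1$ that the theorem's hypothesis excludes; the identity is nevertheless true by inspection of the listed formulas $L_1=\left(\frac{1}{j}\right)$ and $L_2=\left(\frac{j+1}{j}\right)$, so cite those rather than the theorem. Second, in the surjectivity step the parent $r'=\frac{a}{b-a}$ can itself lie in $L_1$ (exactly when $a=1$), and then the assertion that left children of $L_{k'}$ constitute $L_{2k'-1}$ is again an appeal to the theorem outside its stated range: the left child of $\frac{1}{j}$ is $\frac{1}{j+1}$, which does lie in $L_1$ but shifted by one position, not as the $j^{th}$ term of a child diagonal in the theorem's sense. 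Your conclusion --- that $r$ lands in an odd-indexed diagonal --- still holds, and the cleanest fix is to dispose of $a=1$ separately (then $r=\frac{1}{b}$ is visibly a term of $L_1$) and invoke the theorem only for $a\ge 2$, where $r'\notin L_1$ forces $k'>1$. With these two index-$1$ adjustments the proof is complete and correct.
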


\begin{cor}
 If $L_n=\frac{aj+b}{cj+d},$ then  $ad-bc=-1.$
\end{cor}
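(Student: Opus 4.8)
The plan is to prove the corollary by treating $ad-bc$ as a quantity attached to each left diagonal and showing it is an invariant of the recursion established in the preceding theorem, with constant value $-1$. Concretely, I would associate to each diagonal $L_n=\frac{aj+b}{cj+d}$ the integer $\Delta(L_n)=ad-bc$ and prove by strong induction on the diagonal index $n$ that $\Delta(L_n)=-1$ for every $n$.

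For the base cases I would simply evaluate $\Delta$ on the first two diagonals. Since $L_1=\frac{1}{j}=\frac{0\cdot j+1}{1\cdot j+0}$ gives $(a,b,c,d)=(0,1,1,0)$ and $L_2=\frac{j+1}{j}=\frac{1\cdot j+1}{1\cdot j+0}$ gives $(a,b,c,d)=(1,1,1,0)$, in both cases $ad-bc=0-1=-1$. The diagonal $L_1$ must be checked separately because the recursion of the previous theorem is only asserted for $n>1$, so $L_1$ does not arise as a child of any earlier diagonal.

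For the inductive step I would invoke that theorem directly. If $L_n=\frac{aj+b}{cj+d}$ with $\Delta(L_n)=ad-bc$, then its two children are $L_{2n-1}=\frac{aj+b}{(a+c)j+(b+d)}$ and $L_{2n}=\frac{(a+c)j+(b+d)}{cj+d}$. A one-line computation gives $a(b+d)-b(a+c)=ad-bc$ for the former and $(a+c)d-(b+d)c=ad-bc$ for the latter, so $\Delta(L_{2n-1})=\Delta(L_{2n})=\Delta(L_n)$. Since every index $m\ge 3$ is either $2n-1$ (if odd) or $2n$ (if even) for a unique $n$ with $2\le n<m$, each such $L_m$ descends in the tree of diagonals from $L_2$; strong induction then forces $\Delta(L_m)=-1$ for all $m$.

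The computation is most transparent through a matrix reformulation: writing $M_n=\begin{pmatrix} a & b \\ c & d\end{pmatrix}$, the recursion reads $M_{2n-1}=\begin{pmatrix}1&0\\1&1\end{pmatrix}M_n$ and $M_{2n}=\begin{pmatrix}1&1\\0&1\end{pmatrix}M_n$, and both prefactors lie in $SL_2(\Z)$; hence $\det M_n$ is constant along the tree and equals $\det M_2=-1$. There is no genuine obstacle in the algebra — the invariance is immediate either way — so the only point requiring care is the bookkeeping at the boundary of the recursion: confirming that the root diagonals $L_1$ and $L_2$ are handled by direct evaluation (the theorem's hypothesis $n>1$ does not generate them), and that the doubling maps $n\mapsto 2n-1$ and $n\mapsto 2n$ indeed reach every diagonal index exactly once with a strictly smaller parent, which is what makes the induction well founded.
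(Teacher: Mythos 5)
Your proof is correct, and it is precisely the argument the paper has in mind: the paper omits the proof as ``immediate,'' meaning exactly this observation that the recursion $L_{2n-1}=\frac{aj+b}{(a+c)j+(b+d)}$, $L_{2n}=\frac{(a+c)j+(b+d)}{cj+d}$ from the preceding theorem preserves $ad-bc$, together with the base values $ad-bc=-1$ for $L_1$ and $L_2$. Your care with the boundary cases (the theorem only applies for $n>1$) and the $SL_2(\Z)$ reformulation are both sound and, if anything, tidier than what the paper leaves implicit.
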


\begin{cor}
 The sequence $L_n$ converges to $t_{n-1}$. Therefore, if $r\ge 0$ is a rational, then there exists unique natural number $n$ such that  $L_n$ converges to $r$.
\end{cor}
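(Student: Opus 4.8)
The plan is to split the corollary into two independent claims: first, that each diagonal $L_n$, regarded as a sequence in $j$, converges as $j\to\infty$; and second, that the resulting limits enumerate $\Q^{\ge 0}$ bijectively. For the convergence I would write $L_n=\left(\frac{aj+b}{cj+d}\right)$ and simply pass to the limit. Since the denominator coefficient $c$ is positive for every $n$ — it equals $1$ for $L_1$ and is either preserved or increased by the recursion $L_{2n-1}=\frac{aj+b}{(a+c)j+(b+d)}$, $L_{2n}=\frac{(a+c)j+(b+d)}{cj+d}$ of the preceding theorem — the quotient tends to the ratio $\frac{a}{c}$ of leading coefficients, that is, to the coefficient term of $L_n$. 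By the coefficient-tree identification recorded just above the corollaries, this coefficient term is exactly $t_{n-1}$, so $L_n\to t_{n-1}$.

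For the second claim, the key observation is that the tree of coefficient terms is precisely the CW-tree with an extra root $\frac{0}{1}=0$ adjoined above $\frac{1}{1}$, as shown in the ``Coefficients'' diagram: $L_1$ contributes the value $0$, while the coefficient terms of $L_2,L_3,L_4,\ldots$ are exactly the vertices of the CW-tree read in breadth-first order. Invoking Part~\ref{thm:prop:2} of Theorem~\ref{thm:prop}, every positive rational occurs exactly once among the CW-vertices, so the coefficients of $L_2,L_3,\ldots$ list every element of $\Q^{+}$ once and only once; together with the single value $0$ from $L_1$, the map $n\mapsto t_{n-1}=\lim L_n$ is a bijection from $\N$ onto $\{0\}\cup\Q^{+}=\Q^{\ge 0}$. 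Existence and uniqueness of $n$ with $L_n\to r$ are then just the surjectivity and injectivity of this map.

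The routine part is the limit computation; the real content sits in the bijection, and the one point that needs care is the identification of the coefficient tree with $\frac{0}{1}$ adjoined to the CW-tree, since everything then reduces to the already-proved uniqueness of appearances in the CW-tree. I would make that identification precise by an induction mirroring the diagonal recursion: the coefficient term of a diagonal obeys the same left/right child rule $\frac{a}{c}\mapsto\left(\frac{a}{a+c},\frac{a+c}{c}\right)$ as a CW-vertex, with base value $\frac{1}{1}$ at $L_2$, so the coefficient terms of $L_2,L_3,\ldots$ coincide level-by-level with the vertices of the CW-tree. Once this is in place no further estimates are required and the corollary follows.
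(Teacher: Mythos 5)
Your proof is correct and follows exactly the route the paper intends: the paper omits the argument as ``immediate,'' relying on the limit of $\frac{aj+b}{cj+d}$ being the coefficient ratio $\frac{a}{c}$, on the identification of the coefficient tree with the CW-tree rooted at $\frac{0}{1}$, and on Part~\ref{thm:prop:2} of Theorem~\ref{thm:prop} for uniqueness, which is precisely your decomposition. Your added induction making the coefficient-tree identification rigorous (the coefficient terms obey the CW child rule $\frac{a}{c}\mapsto\left(\frac{a}{a+c},\frac{a+c}{c}\right)$ with base $\frac{1}{1}$ at $L_2$) is a worthwhile filling-in of the step the paper leaves implicit.
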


\section{The Minkowski question mark function and CW-tree}

The Minkowski question-mark function, denoted $?(x)$  has many strange and unusual properties and is defined by Hermann Minkowski as follows.
\begin{definition}
Let $x\in \R.$ Then 
$$?(x)=\begin{cases}
        a_0 + 2\sum\limits_{n=1}^{\infty} \frac{(-1)^{n+1}}{2^{a_1+a_2+\dots+a_n}} & \mbox{if $x$ is irrational}\\
        a_0 + 2\sum\limits_{n=1}^{m} \frac{(-1)^{n+1}}{2^{a_1+a_2+\dots +a_n}}& \mbox{otherwise},
         \end{cases}$$
where $[a_0;a_1,a_2,\ldots]$ or $[a_0;a_1,a_2,\ldots,a_m]$ is the (or a) continued fraction of $x$ depending on $x$ is irrational and rational respectively. 
\end{definition}
It is easy to see that for every $r\in \R, ?(1+r)=1+?(r)$ and $[y]=[?(y)],$ where $[b]$ denotes greatest integer function of $b\in \R.$ For more properties and to see how one obtains the Minkowski question mark function as the map between the dyadic tree and the Farey tree refer~\cite{LV}. In this section we see how the Minkowski question mark function acts on CW-tree.
\begin{theorem}
 Let $?\left(\frac{a}{b}\right)=x$ and $\left[\frac{a}{b}\right]=n.$ Then 
 $?\left(\begin{tikzcd}
      &\frac{a}{b} \arrow{d} \arrow[swap]{dl} \\
      \frac{a}{a+b}   & \frac{a+b}{b}
   \end{tikzcd}\right)=\begin{tikzcd}
      &x \arrow{d} \arrow[swap]{dl} \\
      1+\frac{x}{2^{n+1}}-\frac{n+2}{2^{n+1}} &1+x
   \end{tikzcd}.$
  \end{theorem}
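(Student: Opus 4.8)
The plan is to reduce everything to the continued fraction description of $?$ together with the translation rule $?(1+r)=1+?(r)$ recorded just before the statement. I would write a continued fraction of $\frac ab$ as $[a_0;a_1,\dots,a_\ell]$ with $a_0=\left[\frac ab\right]=n$, and set $T=2\sum_{k=1}^{\ell}\frac{(-1)^{k+1}}{2^{a_1+\cdots+a_k}}$, so that by definition $x=?\!\left(\frac ab\right)=n+T$, i.e. $T=x-n$. The right child is then immediate: since $\frac{a+b}{b}=1+\frac ab$, the translation rule gives $?\!\left(\frac{a+b}{b}\right)=1+?\!\left(\frac ab\right)=1+x$, which is the asserted value.

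For the left child I would first expand $\frac{a}{a+b}=\frac{1}{1+\frac ba}$ as a continued fraction and then apply the defining series of $?$. When $n\ge 1$ one has $\frac ba=[0;a_0,a_1,\dots,a_\ell]$, hence $1+\frac ba=[1;a_0,a_1,\dots,a_\ell]$ and therefore $\frac{a}{a+b}=[0;1,a_0,a_1,\dots,a_\ell]$. Feeding the partial denominators $1,a_0,a_1,\dots,a_\ell$ into the formula for $?$, the $k=1$ term contributes $\tfrac12$, the $k=2$ term contributes $-\tfrac{1}{2^{1+n}}$, and the remaining terms, after shifting the index by two, assemble into $\frac{1}{2^{1+n}}\cdot\frac{T}{2}$; using $T=x-n$ and simplifying yields $1+\frac{x-(n+2)}{2^{n+1}}=1+\frac{x}{2^{n+1}}-\frac{n+2}{2^{n+1}}$, exactly the claimed entry.

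The one point needing separate care is the degenerate case $n=0$ (i.e. $\frac ab<1$), where the expansion above would force a $0$ among the partial denominators. Here I would instead use $\frac ba=[a_1;a_2,\dots,a_\ell]$, so that $\frac{a}{a+b}=[0;a_1+1,a_2,\dots,a_\ell]$; the series for $?$ then collapses to $\frac12\cdot 2\sum_{k=1}^{\ell}\frac{(-1)^{k+1}}{2^{a_1+\cdots+a_k}}=\frac x2$, which is precisely $1+\frac{x-(n+2)}{2^{n+1}}$ evaluated at $n=0$. Thus both cases agree with the stated formula. The main obstacle is exactly this bookkeeping: getting the re-indexing of the alternating series right and checking that the sign pattern $(-1)^{k+1}$ is preserved under the index shift (which it is, once everything is written through the single tail sum $T$), while separately treating the $n=0$ branch where a zero partial quotient must be absorbed.
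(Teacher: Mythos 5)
Your proposal is correct and takes essentially the same route as the paper's proof: the right child via the translation rule $?(1+r)=1+?(r)$, and the left child by expanding $\frac{a}{a+b}$ as $[0;1,a_0,a_1,\ldots,a_\ell]$ when $n\ge 1$ (resp.\ $[0;a_1+1,a_2,\ldots,a_\ell]$ when $n=0$) and re-indexing the alternating series, where your tail sum $T=x-n$ plays exactly the role of the paper's manipulation $-a_0+a_0+2\sum_{i}\frac{(-1)^{i+1}}{2^{a_1+\cdots+a_i}}=-a_0+?\left(\frac{a}{b}\right)$. The only (cosmetic) difference is that your case split $n=0$ versus $n\ge 1$ also covers $\frac{a}{b}=1$, which the paper's split into $\frac{a}{b}<1$ and $\frac{a}{b}>1$ nominally omits.
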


\begin{proof}
First note that $?\left(\frac{a+b}{b}\right)=?\left(1+\frac{a}{b}\right)=1+?\left(\frac{a}{b}\right)=1+x.$ \\Now we evaluate 
$?\left(\frac{a}{a+b}\right)$ in two cases.
\begin{description}
 \item[$\frac{a}{b} < 1:$] Let $\frac{a}{b} = [0;a_1,a_2,\ldots,a_m].$ Then we have 
 
 \begin{eqnarray*}
  \frac{b}{a} &=& [a_1;a_2,\ldots,a_m],\;\;\;\;
  \frac{a+b}{a} = [a_1+1;a_2,\ldots,a_m],\;\;\;\;
  \frac{a}{a+b}= [0;a_1+1,a_2,\ldots,a_m].
 \end{eqnarray*}
 Consequently, \begin{eqnarray*}
                ?(\frac{a}{a+b}) &=& 2\sum\limits_{i=1}^{m} \frac{(-1)^{i+1}}{2^{(a_1+1)+a_2+\dots+a_i}}
                = 2\sum\limits_{i=1}^{m} \frac{1}{2}\frac{(-1)^{i+1}}{2^{a_1+a_2+\dots+a_i}}
                = \frac{1}{2}?(\frac{a}{b}) = \frac{x}{2}.
               \end{eqnarray*}
Hence $?(\frac{a}{a+b})= 1+\frac{x}{2^{n+1}}-\frac{n+2}{2^{n+1}}$ as $n=0.$
    
  \item[$\frac{a}{b}>1:$]
  
Let $\frac{a}{b} = [a_0;a_1,a_2,\ldots,a_m].$ Then 
$$\frac{b}{a} = [0;a_0,a_1,a_2,\ldots,a_m],\;\; \frac{a+b}{a} = [1;a_0,a_1,a_2,\ldots,a_m]\; and \;
 \frac{a}{a+b} = [0;1,a_0,a_1,a_2,\ldots,a_m].$$
\begin{eqnarray*}
 ?(\frac{a}{a+b})&=& 
2\left(\frac{1}{2} - \frac{1}{2^{1+a_0}} + \sum\limits_{i=1}^{m} \frac{(-1)^{i+1}}{2^{1+a_0+\dots+a_i}}\right)\\
 &=& 1 - \frac{2}{2^{1+a_0}} + 2\left(\frac{1}{2^{1+a_0}}\sum_{i=1}^{m} \frac{(-1)^{i+1}}{2^{a_1+a_2+\dots+a_i}}\right)\\
 &=& 1 - \frac{2}{2^{1+a_0}} + \frac{1}{2^{1+a_0}}\left(-a_0+a_0+2\sum_{i=1}^{m} \frac{(-1)^{i+1}}{2^{a_1+a_2+\dots+a_i}}\right)\\
&=&1 - \frac{2}{2^{1+a_0}} + \frac{-a_0+?(\frac{a}{b})}{2^{1+a_0}}
\end{eqnarray*}
Thus  $?(\frac{a}{a+b}) = 1 + \frac{x-(n+2)}{2^{n+1}}.$ Hence the result.
\end{description}
\end{proof}
Recall that in the CW-tree, every positive rational number uniquely identified by a path. Hence we can assume path $P$ as a positive rational number.  The following result is a direct consequence of above theorem.

\begin{cor} Let $P$ be a path in CW-tree. Then
\begin{multicols}{2}
 \begin{enumerate}
  \item  $?(PR^n) = n+?(P),$
  \item $?(PL^{n+1}) = \frac{1}{2^n}?(PL),$
  \item $?(PLR^nL) = 1-2^{-n}+2^{-(n+1)}?(PL).$
 \end{enumerate}
\end{multicols}
\end{cor}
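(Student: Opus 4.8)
The plan is to derive all three identities from the preceding theorem together with the elementary relation $?(1+r)=1+?(r)$ and the observation that every left child $a/(a+b)$ lies strictly in $(0,1)$; throughout I identify a path $P$ with the positive rational labelling its endpoint, so that appending $R$ or $L$ means passing to the right or left child. For Part~1, the right child of $r$ is $(a+b)/b=1+r$, so the theorem (equivalently, the identity $?(1+r)=1+?(r)$) gives $?(PR)=1+?(P)$; an induction on $n$, applying this same step to the rational $PR^{n}$, then yields $?(PR^{n})=n+?(P)$.

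For Part~2, the key point is that any rational of the form $PL^{m}$ with $m\ge 1$ is a left child and hence lies in $(0,1)$, so its floor is $0$. Specialising the theorem's left-child formula to $n=0$ gives, for $r<1$, the simplification $?(\text{left child of }r)=\tfrac12\,?(r)$. Starting from $?(PL)$ and applying this repeatedly — which is legitimate because each intermediate term $PL,PL^{2},\dots$ is again strictly less than $1$ — an induction on $k$ shows $?(PL^{k+1})=2^{-k}?(PL)$, which is exactly the claim with $k=n$.

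For Part~3, I would set $q=PL$, so that $q\in(0,1)$ and $?(q)=?(PL)$. By Part~1, $?(qR^{n})=n+?(q)$, and since $qR^{n}=n+q$ with $0<q<1$ we have $\lfloor qR^{n}\rfloor=n$. Now I apply the theorem's left-child formula to the rational $r=qR^{n}$, whose floor is exactly $n$ and whose $?$-value is $x=n+?(q)$; this gives $?(qR^{n}L)=1+\frac{x-(n+2)}{2^{n+1}}=1+\frac{?(q)-2}{2^{n+1}}$. Rewriting $\frac{-2}{2^{n+1}}=-2^{-n}$ and $\frac{?(q)}{2^{n+1}}=2^{-(n+1)}?(q)$ produces precisely $1-2^{-n}+2^{-(n+1)}?(PL)$.

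The only delicate point, and the step I would be most careful about, is the bookkeeping of floors: the theorem's left-child formula depends on the integer $n=\lfloor r\rfloor$, so in Parts~2 and~3 one must first verify that the relevant rational sits in the correct range — namely $PL^{m}<1$ giving floor $0$ in Part~2, and $qR^{n}=n+q\in(n,n+1)$ giving floor $n$ in Part~3 — before the formula may be invoked. Once these range checks are in place, everything else reduces to routine induction and algebraic simplification.
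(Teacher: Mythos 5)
Your proof is correct and takes essentially the route the paper intends: the paper omits the argument, calling the corollary a direct consequence of the preceding theorem, and your derivation (right child adds $1$ to $?$; left-child formula specialised via the floor $n=0$ for Part~2 and floor $n$ for Part~3) is exactly that consequence written out. The floor bookkeeping you flag — checking $PL^{m}\in(0,1)$ and $PLR^{n}\in(n,n+1)$ before invoking the left-child formula — is indeed the only delicate point, and you handle it correctly.
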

The following result shows that sum of all the fractions in a level of the CW-tree  is same as the sum of images  of Minkowski question mark function on all the fractions of that level.

\begin{theorem} Let $F_n$ denotes the set of fractions at a level $n$ in the CW-tree. Then 
 $\sum\limits_{r \in F_n}r = \sum\limits_{r \in F_n}?(r).$
\end{theorem}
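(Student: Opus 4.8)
The plan is to prove the stronger statement that $S_N := \sum_{r\in F_N}?(r)$ equals $\Sigma_N := \sum_{r\in F_N} r$ by showing that both sequences obey the same first-order recursion with the same initial value. By Part~\ref{thm:prop:11} of Theorem~\ref{thm:prop} we already know $\Sigma_N = 3\cdot 2^{N-2}-\tfrac12$, equivalently $\Sigma_{N+1}-\Sigma_N = 3\cdot 2^{N-2}$, so it suffices to establish the identical recursion $S_{N+1}-S_N = 3\cdot 2^{N-2}$ together with $S_1 = ?(1) = 1 = \Sigma_1$.

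First I would pass from level $N$ to level $N+1$ using the preceding theorem, which computes $?$ on the two children of a vertex $r$. For $r\in F_N$ write $x(r)=?(r)$ and $\ell(r)=\lfloor r\rfloor$. The right child contributes the value $1+x(r)$, and the left child contributes $1+\dfrac{x(r)-(\ell(r)+2)}{2^{\ell(r)+1}}=1-\dfrac{2-y(r)}{2^{\ell(r)+1}}$, where $y(r)=x(r)-\ell(r)$. Summing over the $2^{N-1}$ vertices of $F_N$ gives $S_{N+1}=2^{N}+S_N-T_N$ with $T_N:=\sum_{r\in F_N}\dfrac{2-y(r)}{2^{\ell(r)+1}}$. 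Thus everything reduces to proving $T_N=2^{N-2}$.

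The key lemma, and the step I expect to be the main obstacle, is the reciprocal identity: for $r>1$ with continued fraction $r=[a_0;a_1,\dots,a_k]$ (so $a_0=\ell(r)$), one has $1/r=[0;a_0,a_1,\dots,a_k]$, and substituting this expansion into the definition of $?$ and factoring out $2^{-a_0}$ yields $?(1/r)=\dfrac{a_0+2-x(r)}{2^{a_0}}=\dfrac{\ell(r)+2-x(r)}{2^{\ell(r)}}=\dfrac{2-y(r)}{2^{\ell(r)}}$. The computation is short but must be done carefully, in particular checking it for integer $r$ (where $k=0$) and confirming independence of the chosen continued-fraction representation.

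Finally I would evaluate $T_N$ by the reciprocal pairing of a level. For $N\ge 2$ the element $1$ does not occur (Part~\ref{thm:prop:2}), so by Part~\ref{thm:prop:4} the $2^{N-1}$ fractions of $F_N$ split into $2^{N-2}$ pairs $\{r,1/r\}$ with $r>1$. In such a pair the smaller partner $s=1/r<1$ has $\ell(s)=0$ and $y(s)=x(s)=?(1/r)$, so it contributes $\dfrac{2-?(1/r)}{2}$ to $T_N$, while the larger partner contributes $\dfrac{2-y(r)}{2^{\ell(r)+1}}=\tfrac12\,?(1/r)$ by the lemma. The two contributions sum to exactly $1$, the unknown $?(1/r)$ cancelling, so $T_N=2^{N-2}$. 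Hence $S_{N+1}-S_N=2^{N}-2^{N-2}=3\cdot 2^{N-2}$, which matches the recursion for $\Sigma_N$; with the base cases $S_1=\Sigma_1=1$ and $S_2=\Sigma_2=\tfrac52$ (verified directly), induction yields $S_N=\Sigma_N$ for every $N$, as claimed.
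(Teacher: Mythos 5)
Your proposal is correct and takes essentially the same route as the paper's proof: both rest on the reciprocal identity $?\left(\frac{1}{r}\right)=\frac{2+a_0-?(r)}{2^{a_0}}$ (with $a_0=[r]$), pair each fraction at a level with its reciprocal via Part~\ref{thm:prop:4}, and conclude by showing the $?$-sums satisfy the same recurrence $S_{n+1}-S_n=3\cdot 2^{n-2}$ and the same initial values as the ordinary level sums of Part~\ref{thm:prop:11}. The only difference is bookkeeping: the paper sums the four children's $?$-values of a reciprocal pair directly, obtaining $3+?(r)+?\left(\frac{1}{r}\right)$, whereas you isolate the correction term $T_N$ and show each pair contributes exactly $1$ to it---the same cancellation in a slightly different arrangement.
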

\begin{proof}
We prove the result by Mathematical induction. First we show that result is true for $n=2.$
$$\sum\limits_{r \in F_2}r = \frac{1}{2}+2 = ?(\frac{1}{2})+?(2) = \sum\limits_{r \in F_2}?(r).$$\\
We know that  $\frac{a}{b}\in F_n\Leftrightarrow\frac{b}{a}\in F_n.$ Hence without loss of generality we can assume that $\frac{a}{b}>1$. Consequently,  if $\frac{a}{b}=[a_0;a_1,a_2,\ldots,a_m],$ then   $\frac{b}{a} = [0;a_0;a_1,a_2,\ldots,a_m].$ Let $?(\frac{a}{b})=x.$
\begin{eqnarray*}
 ?(\frac{b}{a}) &=& 2\sum\limits_{i=0}^{m} \frac{(-1)^{i}}{2^{a_0+a_1+\dots+a_i}}=2\left(\frac{1}{2^{a_0}} - \sum\limits_{i=1}^{m}\frac{(-1)^{i+1}}{2^{a_0+\dots+a_i}}\right)
= \frac{2}{2^{a_0}} - \frac{1}{2^{a_0}}\left(2\sum\limits_{i=1}^{m}\frac{(-1)^{i+1}}{2^{a_1+\dots+a_i}}\right)\\
&=&\frac{2}{2^{a_0}} - \frac{1}{2^{a_0}}\left(-a_0+a_0+2\sum\limits_{i=1}^{m}\frac{(-1)^{i+1}}{2^{a_1+\dots+a_i}}\right)
= \frac{2}{2^{a_0}} - \frac{x-a_0}{2^{a_0}}\\
&=&\frac{2+a_0-x}{2^{a_0}}
\end{eqnarray*}

If $\frac{a}{b}\in F_k$, then $\frac{a}{a+b},\frac{a+b}{b},\frac{b}{a+b},\frac{a+b}{a}\in F_{k+1}.$ And
$$?\left(\frac{a}{a+b}\right)=1+\frac{x-(a+2)}{2^{1+a_0}},\;\;\;\;\; ?\left(\frac{a+b}{b}\right)=1+x,\;\;\;?\left(\frac{b}{a+b}\right)=\frac{2+a_0-x}{2^{1+a_0}},\;\;\;\; ?\left(\frac{a+b}{a}\right)=1+\frac{2+a_0-x}{2^{a_0}}.$$
Thus $$?\left(\frac{a}{a+b}\right)+?\left(\frac{a+b}{b}\right)+?\left(\frac{b}{a+b}\right)+?\left(\frac{a+b}{a}\right) = 3+x+\frac{2+a_0-x}{2^{a_0}} = 3+?\left(\frac{a}{b}\right)+?\left(\frac{b}{a}\right).$$

That is $\sum\limits_{r\in F_n}?(r)=3\cdot 2^{n-3}+\sum\limits_{r\in F_{n-1}}?(r).$ We got the same recurrence relation for sum of fraction at a level $n$ of CW-tree [refer~Part(\ref{thm:prop:11}) of Theorem~\ref{thm:prop}]. Since both sums have the same recurrence relation and  the same starting value, the result follows.
\end{proof}

It is easy see that $$?(L_i)=?(t_{i-1})+\left(\frac{1}{2^{j-1}}\right) 2^{\left([t_{i-1}]-[\log_2(i-1)]\right)},$$ where $L_i$ and $\left(\frac{1}{2^{j-1}}\right)$ are sequences. Hence, we obtain  the following tree, when we apply the Minkowski question function on diagonals of CW-tree.

\begin{figure}[h]
\unitlength=0.65mm
 \begin{tikzpicture}
    \vertex(c0)  at (7.5,1) [label=above:$x$]{};
	\vertex (c1) at (7.5,0) [label=below:$1+x$]{};
	\vertex (c2) at (3.5,-1) [label=above:$\frac{1}{2}+\frac{x}{4}$]{};
	\vertex (c3) at (11.5,-1)  [label=right:$2+x$]{};
	\vertex (c4) at (1.5,-2) [label=left:$\frac{1}{4}+\frac{x}{8}$]{};
	\vertex (c5) at (5.5,-2)  [label=right:$\frac{3}{2}+\frac{x}{4}$]{};
	\vertex (c6) at (9.5,-2) [label=right:$\frac{3}{4}+\frac{x}{8}$]{};
	\vertex (c7) at (13.5,-2) [label=right:$3+x$]{};	
	\vertex (c8) at (0.5,-3) [label=left:$\frac{1}{8}+\frac{x}{16}$]{};
	\vertex (c9) at (2.5,-3)  [label=right:$\frac{5}{4}+\frac{x}{8}$]{};
	\vertex (c10) at (4.5,-3) [label=right:$\frac{5}{8}+\frac{x}{16}$]{};
	\vertex (c11) at (6.5,-3) [label=right:$\frac{5}{2}+\frac{x}{4}$]{};
	\vertex (c12) at (8.5,-3)  [label=right:$\frac{3}{8}+\frac{x}{16}$]{};
	\vertex (c13) at (10.5,-3) [label=right:$\frac{7}{4}+\frac{x}{8}$]{};
	\vertex (c14) at (12.5,-3) [label=right:$\frac{7}{8}+\frac{x}{16}$]{};
	\vertex (c15) at (14.5,-3)  [label=right:$4+x$]{};	
	\vertex (c16) at (0,-4)  [label=left:$\frac{1}{16}+\frac{x}{32}$]{};
	\vertex (c17) at (1,-5)  [label=below:$\frac{9}{8}+\frac{x}{16}$]{};
	\vertex (c18) at (2,-4)  [label=below:$\frac{9}{16}+\frac{x}{32}$]{};
	\vertex (c19) at (3,-5)  [label=below:$\frac{9}{4}+\frac{x}{8}$]{};
	\vertex (c20) at (4,-4)  [label=below:$\frac{5}{16}+\frac{x}{32}$]{};
	\vertex (c21) at (5,-5)  [label=below:$\frac{13}{8}+\frac{x}{16}$]{};
	\vertex (c22) at (6,-4)  [label=below:$\frac{13}{16}+\frac{x}{32}$]{};
	\vertex (c23) at (7,-5)  [label=below:$\frac{7}{2}+\frac{x}{4}$]{};
	\vertex (c24) at (8,-4)  [label=below:$\frac{3}{16}+\frac{x}{32}$]{};
	\vertex (c25) at (9,-5)  [label=below:$\frac{11}{8}+\frac{x}{16}$]{};
	\vertex (c26) at (10,-4)  [label=below:$\frac{11}{16}+\frac{x}{32}$]{};
	\vertex (c27) at (11,-5)  [label=below:$\frac{11}{4}+\frac{x}{8}$]{};
	\vertex (c28) at (12,-4)  [label=below:$\frac{7}{16}+\frac{x}{32}$]{};
	\vertex (c29) at (13,-5)  [label=below:$\frac{15}{8}+\frac{x}{16}$]{};
	\vertex (c30) at (14,-4)  [label=below:$\frac{15}{16}+\frac{x}{32}$]{};
	\vertex (c31) at (15,-4)  [label=right:$5+x$]{};
	
		\path[-]
		(c0) edge (c1)
		(c1) edge (c2)
		(c1) edge (c3)
		(c2) edge (c4)
		(c2) edge (c5)
		(c3) edge (c6)
		(c3) edge (c7)
		(c4) edge (c8)
		(c4) edge (c9)
		(c5) edge (c10)
		(c5) edge (c11)
		(c6) edge (c12)
		(c6) edge (c13)
		(c7) edge (c14)
		(c7) edge (c15)		
		(c8) edge (c16)
		(c8) edge (c17)
		(c9) edge (c18)
		(c9) edge (c19)
		(c10) edge (c20)
		(c10) edge (c21)
		(c11) edge (c22)
		(c11) edge (c23)
		(c12) edge (c24)
		(c12) edge (c25)
		(c13) edge (c26)
		(c13) edge (c27)
		(c14) edge (c28)
		(c14) edge (c29)
		(c15) edge (c30)
		(c15) edge (c31)

		;

\end{tikzpicture}
\caption{?(Figure~\ref{fig:CWD})}
\label{fig:?(CWD)}
\end{figure}

Similarly, one can obtain the coefficients tree  and the constants tree from above tree as we got for Figure~\ref{fig:CWD}.

\end{document}